\documentclass[12 pt]{article}

\usepackage{amsmath, amsfonts, amssymb}

\usepackage[english]{babel}

\sloppy

\newtheorem{thm}{Theorem}

\newtheorem{prop}{Proposition}

\newtheorem{prob}{Problem}

\newtheorem{rem}{Remark}

\newtheorem{defin}{Definition}

\def\e {\varepsilon }

\def\sp {singular point }

\def\B {\mathcal B}

\def\zz{\mathbb Z}

\def \e {\varepsilon }

\def\D{\Delta }

\def\F {\mathcal F}

\def\homeo{homeomorphism }

\def\hol{holonomy }

\def\t {\tilde }

\def\st {such that }

\def\gas {genericity assumptions }

\newenvironment{proof}{{\noindent
\textbf{Proof}\,\,}}{\hspace*{\fill}$\Box$\medskip}

\def\be {\begin{equation}}

\def\ee {\end{equation}}

\def\qvfs {quadratic vector fields }

\def\qvf {quadratic vector field }

\def\nbd{neighborhood }

\def\cc{\mathbb C}

\def\fols{foliations }

\def\fol{foliation }

\def\pf {polynomial foliation }

\def\topeq {topologically equivalent }

\def\lvfs {linear vector fields }

\def\vfs {vector fields }

\def\vf {vector field }

\def\rig{rigidity }

\def\tes {there exists }

\def\A {\mathcal A}

\def\M{\mathcal M}

\def\l {\lambda }

\def\bbis {Baum-Bott indexes }

\def\mm{moduli map }

\def\G {\mathcal G}

\def\be {begin equation }

\def\ee {end equation } \def\g {\gamma }

\def\th {transversally holomorphic }

\title{Total rigidity of generic quadratic vector fields}

\author {Yu.Ilyashenko \thanks{ The first author was supported by
part by the grants NSF 0700973, RFBR 10-01-00739-а , RFFI-CNRS
050102801} \thanks {Cornell University, US; Moscow State and
Independent Universities, Steklov Math. Institute, Moscow} and
V.Moldavskis \thanks {Cornell University, US } }

\date{}

\begin{document}

\maketitle

\vskip 1pc

\emph{To the memory of Vladimir Arnold, a teacher, a leader, a poet
in mathematics}

\vskip 1pc

\begin{abstract}  We consider a class of foliations on the complex
projective plane that are determined by a \qvf in a fixed affine
neighborhood. Such foliations, as a rule, have an invariant line at
infinity. Two foliations with singularities on $\cc P^2$ are
topologically equivalent provided that there exists a homeomorphism
of the projective plane onto itself that preserves orientation both
on the leaves and in $\cc P^2$ and brings the leaves of the first
\fol to that of the second one. We prove that a generic \fol of this
class may be \topeq  to but a finite number of \fols of the same class, modulo
affine equivalence. This property is called \emph{total rigidity}.
Recent result of Lins Neto implies that the finite number above
does not exceed $240$.

This is the first  of the two closely related papers. It deals with
the rigidity properties of quadratic foliations, whilst the second
one studies the foliations of higher degree. \end{abstract}

\section{Introduction}   \label{sec:intro}

\subsection{Total rigidity}  \label{sub:tr} Total \rig is a property
opposite to structural stability. It is a formalization of the
following paradigm. \emph{For generic polynomial \fols in $\cc P^d$,
topological equivalence implies affine equivalence.} This is a
heuristic principle rather than a theorem. In the above form it is
proved only for  \lvfs of strict Siegel type whose normal form has a
non-trivial Jordan cell \cite{O96}.

A polynomial \vf in $\cc^d$ may be extended to a holomorphic line
field in $\cc P^d$; generically, the extended field has but   a
finite number of singular points. We will refer to it as a
polynomial \fol in $\cc P^d$, and denote by $\A_{n,d}$ the class of
all such foliations.

\begin{defin} \label{def:tr}   A \pf of class $\A_{n,d}$ is totally
rigid provided that \tes but a finite number of \fols of this class
(up to affine equivalence) that are \topeq to the foliation
considered. \end{defin} In what follows, $d = 2$, so we write $\A_n$
instead of $\A_{n,2}$.

\subsection{Main results}  \label{sub:main}

\begin{thm} \label{thm:tr}  A generic \fol of class $\A_2$, i.e.
quadratic \fol on $\cc P^2$, is totally rigid. \end{thm}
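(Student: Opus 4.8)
\medskip
\noindent\textbf{Plan of the proof.}\,\, The plan is to argue in three stages: a topological reduction to the \hol of the line at infinity; an analytic \rig step promoting a topological equivalence to a \th one; and an algebraic counting step based on the theorem of Lins Neto. Call $\F\in\A_2$ \emph{generic} if it lies in a suitable residual subset of $\A_2$ on which all seven \sps are nondegenerate, hyperbolic and pairwise nonresonant, their eigenvalue ratios satisfy no nontrivial quadratic equation with integer coefficients, the line at infinity $L_\infty$ is invariant and carries exactly three of the \sps while the other four lie in $\cc^2$, the only invariant compact curve is $L_\infty$ and there is no separatrix connection other than $L_\infty$ itself, every leaf meets every \nbd of $L_\infty$, the generic affine leaf is dense in $\cc P^2$, and, finally, the \hol group of $L_\infty$ is a \emph{generic} finitely generated subgroup of $\mathrm{Diff}(\cc,0)$, meaning that it contains a hyperbolic germ, acts with all orbits dense on a punctured cross-section, and admits no nontrivial analytic symmetry. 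Verifying that these last, dynamical requirements hold on a residual set is itself a real point: it rests on the density theorems for leaves of generic \pfs together with a transversality argument in the $11$-dimensional space of quadratic \fols, which shows that the \hol at infinity of a generic member is indeed generic among finitely generated subgroups of $\mathrm{Diff}(\cc,0)$.

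So let $\F_1$ be generic and let $H\colon\cc P^2\to\cc P^2$ be a \homeo carrying the leaves of $\F_1$ onto those of a second quadratic \fol $\F_2$, orientation preserving on the leaves and on $\cc P^2$. The first step is to prove that $H(L_\infty^1)=L_\infty^2$, so that $\F_2$ is again invariant at infinity. Since $H$ maps \sps, leaves, separatrices and separatrix connections of $\F_1$ to those of $\F_2$, the set $H(L_\infty^1)$ is a compact set which is analytic along separatrices, hence, by Chow's theorem, an invariant algebraic curve of $\F_2$; being a topologically embedded sphere it is a smooth rational curve, and by the \gas --- unique invariant compact curve, absence of spurious separatrix connections, and the fact that a quadratic \fol tangent to its line at infinity at only finitely many points has a different topological picture there --- it can only be $L_\infty^2$. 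Restricting $H$ to a germ of cross-section of $\F_1$ at a nonsingular $p\in L_\infty^1$ and to a cross-section at $H(p)$, one obtains a germ of \homeo $h\colon(\cc,0)\to(\cc,0)$ conjugating the \hol group $G_1$ of $L_\infty^1$ to the \hol group $G_2$ of $L_\infty^2$; each $G_k$ is generated by three germs whose product is the identity, with multipliers $\exp(2\pi\sqrt{-1}\,\nu)$ for the eigenvalue ratios $\nu$ of $\F_k$ at the three \sps on its line at infinity.

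The core of the argument is to make $h$, and then $H$, holomorphic. By the \rig theorem for generic finitely generated subgroups of $\mathrm{Diff}(\cc,0)$ --- Nakai's density theorem combined with the hyperbolic-fixed-point technique of Ilyashenko and Shcherbakov --- any \homeo conjugating two such groups is conformal; hence $h$ is holomorphic on a smaller disc and $G_1,G_2$ are \emph{analytically} conjugate. One then propagates the analyticity of $H$: a small cross-section $\tau_0$ of $\F_1$ at $p$ is a complete transversal (every leaf meets a \nbd of $L_\infty$), so for any cross-section $\tau$ and any $q\in\tau$ the leaf through $q$ meets $\tau_0$, and near $q$ one has $H=(\text{holonomy of }\F_2)\circ h\circ(\text{holonomy of }\F_1)^{-1}$, a composition of holomorphic maps; therefore $H$ is holomorphic on every cross-section, i.e. $H$ is a \th \homeo conjugating $\F_1$ and $\F_2$. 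I expect this to be the main obstacle: the \rig theorem for subgroups of $\mathrm{Diff}(\cc,0)$ must be available precisely in the ``the conjugating \homeo is conformal'' form, and --- harder still --- one must establish the parametric statement that the \hol at infinity of a \emph{generic} \qvf meets its hypotheses; in addition the propagation step depends on the nontrivial density of leaves of generic quadratic \fols.

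It remains to count. Because $H$ is holomorphic on every cross-section and maps separatrices to separatrices, at each of the seven \sps of $\F_1$ it conjugates, conformally, the (unordered) pair of separatrix \hol germs to the corresponding pair for $\F_2$ at the image \sp, hence preserves their multipliers as a pair. At a \sp with eigenvalue ratio $\nu$ those multipliers are $\exp(2\pi\sqrt{-1}\,\nu)$ and $\exp(2\pi\sqrt{-1}/\nu)$, so if $\nu'$ is the ratio at the image \sp then $\nu'\equiv\nu,\ 1/\nu'\equiv1/\nu$ or $\nu'\equiv1/\nu,\ 1/\nu'\equiv\nu$ modulo $\zz$; in either case the ``no nontrivial integral quadratic equation'' \ga on the ratios of $\F_1$ forces $\{\nu',1/\nu'\}=\{\nu,1/\nu\}$, and in particular $\nu'+1/\nu'=\nu+1/\nu$. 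Thus the seven \bbis of $\F_2$ coincide with those of $\F_1$, i.e. $\F_2$ lies in the fibre of $\F_1$ under the Baum--Bott map on the space of degree-two \fols with an invariant line. By the recent theorem of Lins Neto this fibre contains at most $240$ \fols modulo affine equivalence; hence $\F_1$ is topologically equivalent to at most $240$ pairwise affinely nonequivalent quadratic \fols, which is total \rig. $\Box$
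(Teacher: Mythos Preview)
Your overall architecture coincides with the paper's: reduce to the monodromy at infinity, invoke the Shcherbakov--Nakai rigidity theorem to make the conjugacy of holonomy groups holomorphic, propagate transversal holomorphy along dense leaves, read off the characteristic numbers from the separatrix holonomies, and finish with Lins Neto's $240$. One local difference is worth noting: where you impose a Diophantine genericity condition (``no nontrivial integral quadratic relation'') to kill the $\bmod\ \zz$ ambiguity in the multipliers, the paper instead invokes a result of Ilyashenko and Naishul' (its Proposition~1) asserting that a topological equivalence of nondegenerate singularities with \emph{analytically} conjugate holonomies forces the characteristic numbers to agree on the nose. Your route is correct and more elementary, at the cost of a slightly smaller generic set; the paper's route avoids the extra hypothesis but imports a nontrivial local theorem.

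There is, however, a genuine gap in your counting step. You assert that ``by the recent theorem of Lins Neto this fibre contains at most $240$ foliations modulo affine equivalence,'' but Lins Neto's theorem computes the degree of the Baum--Bott map on the \emph{full} space of degree-two foliations on $\cc P^2$, not on the hypersurface $\t\A_2$ of foliations with an invariant line at infinity. The image $\mu(\t\A_2)$ sits inside the Camacho--Sad locus, a proper subvariety of the target, so a priori every point of $\mu(\t\A_2)$ could be a critical value of the big map, and the fibres of $\mu|_{\t\A_2}$ could be positive-dimensional; degree $240$ upstairs says nothing about this. The paper closes this gap with its Theorem~4: it exhibits a specific $v_0\in\t\A_2$ (a foliation with three invariant lines and a Darboux first integral) and shows, via a Camacho--Sad index argument, that no nontrivial analytic curve through $v_0$ can have constant Baum--Bott data, hence $\mu|_{\t\A_2}$ has full rank generically and the blown-down locus $\Sigma$ is a proper subvariety to be excluded from the generic set. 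Without this step your argument does not yield finiteness, let alone the bound $240$. (A smaller point: your Chow-type argument for $H(L_\infty^1)=L_\infty^2$ tacitly places genericity hypotheses on $\F_2$, which is not known to be generic; the paper sidesteps this by citing a standard result that the conjugating homeomorphism automatically matches the infinite leaves and their monodromies.)
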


The first (computer assisted) proof of this theorem was obtained by
the second author  \cite{M06}. Here we present a purely analytic
proof of a slightly stronger result.
\begin{thm}  \label{thm:strong} Generic \qvf is \topeq to no more
than $240$ quadratic vector fields, modulo affine equivalence.
\end{thm}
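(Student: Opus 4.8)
The plan is to prove Theorem~\ref{thm:strong} (which contains Theorem~\ref{thm:tr}) by showing that the topological equivalence class of a generic $\F_1\in\A_2$ determines the unordered collection of its seven Baum--Bott indices, and then invoking Lins Neto's theorem that the Baum--Bott map on quadratic foliations, modulo affine equivalence, is generically finite of degree~$240$. First I would fix an open dense $\mathcal U\subset\A_2$, the complement of a proper analytic subset, on which the foliation has the line at infinity $L_\infty$ invariant, has exactly seven distinct non-degenerate \sps (three on $L_\infty$ and four in $\cc^2$) with irrational, pairwise distinct, non-resonant eigenvalue ratios, has no invariant algebraic curve other than $L_\infty$, and whose \hol group $G$ of the leaf $L_\infty\setminus\mathrm{Sing}$ is a non-solvable subgroup of $\mathrm{Diff}(\cc,0)$ possessing a hyperbolic generator and dense pseudo-orbits. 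Algebraicity of a leaf $L$ is a topological feature --- for instance, $L$ is algebraic iff $\overline L$ is compact and $\overline L\setminus L$ is finite --- so a topological equivalence $h$ between $\F_1\in\mathcal U$ and some $\F_2\in\A_2$ carries $L_\infty$ onto the unique algebraic leaf of $\F_2$; a degree argument (the homology class of that leaf must be that of a line) shows it is again a line, so after normalizing coordinates we may assume $h(L_\infty)=L_\infty$, with $h$ sending the triple of \sps at infinity of $\F_1$ onto that of $\F_2$ and the affine quadruple onto the affine quadruple.

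Restricting $h$ to a tubular \nbd of $L_\infty$ and sliding along leaves to a transversal yields a topological conjugacy $h_*\colon G_1\to G_2$ of the \hol groups; as $h$ preserves orientation both on the leaves and in $\cc P^2$, this conjugacy is orientation-preserving on the transversal. The key step --- and, I expect, the main obstacle --- is to upgrade $h_*$ to an \emph{analytic} conjugacy. This is exactly the content of the rigidity theory for non-solvable subgroups of $\mathrm{Diff}(\cc,0)$ (Shcherbakov and Nakai, with later refinements): under suitable richness hypotheses on the source group --- secured for $G_1$ by the \gas on $\F_1$ --- every orientation-preserving topological conjugacy of such a group onto another one is automatically conformal. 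Granting this, $G_1$ and $G_2$ are analytically conjugate, so the multipliers of the three distinguished generators (holonomies of small loops around the \sps at infinity) coincide, and, being pairwise distinct, are matched without ambiguity. The multiplier at $p_j$ equals $\exp(2\pi i\,\sigma_j)$, where $\sigma_j$ is the ratio at $p_j$ of the eigenvalue of $\F$ transverse to $L_\infty$ to the one along $L_\infty$; since the analytic class of a conformal germ determines its multiplier, we recover $\sigma_1,\sigma_2,\sigma_3$ exactly, hence the three Baum--Bott indices $BB(p_j)=\sigma_j+\sigma_j^{-1}+2$ at infinity, which are thus topological invariants of $\F_1$.

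It remains to pin down the four affine indices and to extract the numerical bound. Here I would argue that the analytic conjugacy class of $G_1$, a topological invariant by the above, together with the holonomy--germ dictionary (Mattei--Moussu) and the index relations imposed by the invariant line --- the Camacho--Sad relation along $L_\infty$, the Baum--Bott identity $\sum_p BB(p)=(d+2)^2$, and the logarithmic-residue relations --- determines all seven Baum--Bott indices of $\F_1$. Consequently $\F_1$ and $\F_2$ carry the same Baum--Bott data; shrinking $\mathcal U$ so that $\F_1$ lies over a point where Lins Neto's theorem applies, and noting that for generic foliations $\mathrm{PGL}_3$-equivalence coincides with affine equivalence (an equivalence must preserve the unique algebraic leaf), the number of affine-equivalence classes of such $\F_2$ is at most~$240$. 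Besides the rigidity step, the subtle point here is making sure the reconstruction genuinely recovers \emph{every} Baum--Bott index --- those at the four affine singularities included --- so that Lins Neto's count can indeed be invoked.
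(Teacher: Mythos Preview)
Your overall architecture matches the paper's: prove that the Baum--Bott indices are topological invariants of a generic $\F\in\A_2$, then invoke Lins Neto's degree bound. The treatment of the three singular points at infinity via Shcherbakov--Nakai rigidity of the holonomy group is also essentially the paper's Step~1. However, two genuine gaps remain.

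First, the multiplier $\exp(2\pi i\sigma_j)$ determines $\sigma_j$ only modulo $\mathbb Z$, not ``exactly'' as you claim; the analytic conjugacy class of a generic hyperbolic germ is precisely its multiplier, so nothing more is recovered. The paper closes this gap with an additional result (its Proposition~\ref{prop:forced}, due to Ilyashenko and Naishul'): if two local foliations at non-degenerate singular points are topologically equivalent \emph{and} their holonomies are analytically conjugate, then the characteristic numbers agree on the nose, not just modulo $\mathbb Z$. Second, and more seriously, your proposed recovery of the four \emph{affine} Baum--Bott indices from the three infinite ones via ``index relations'' cannot work: Camacho--Sad along $L_\infty$ involves only the three infinite ratios, and the global Baum--Bott identity contributes a single linear relation among all seven, leaving three of the four affine indices undetermined. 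The paper takes an entirely different route here. Using the density of leaves (one of the genericity assumptions), it propagates the transversal holomorphy of $H$ from a cross-section near $L_\infty$ to \emph{every} non-singular point of $\F$. Then at each affine singular point $O$ one picks a local separatrix, and the holonomy around $O$ along that separatrix is analytically conjugated by the (now everywhere transversally holomorphic) $H$ to the corresponding holonomy for $\G$; Proposition~\ref{prop:forced} again yields equality of the characteristic numbers. This density-propagation step is the missing idea in your proposal. A smaller omission: to pass from Lins Neto's degree $240$ on the larger space of degree-two foliations to a finite bound on fibers over $\t\A_2$, one must know that the restricted moduli map is generically of full rank (the paper's Theorem~\ref{thm:mod}); otherwise the image of $\t\A_2$ could sit inside the non-generic locus of the Baum--Bott map.
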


An earlier estimate given in \cite{M06} is $14 \cdot 26^4$.
In subsection~\ref{sub:impr}, we state even a stronger result, Theorem~\ref{thm:sstrong}.
It is  proved together with Theorem~\ref{thm:strong}.
The sketch of the proof follows.

A generic \fol of class $\A_n$ has $n + 1$ singular points at
infinity, $O_1, \dots , O_{n+1}$, and $n^2$ singular points in
$\cc^2: Q_{n+2}, \dots , O_N \ N = n^2 + n + 1$.  For a singular
point $O$ of a \fol let $\l , \mu $ be the eigenvalues of the
linearization of the corresponding \vf at $O$. These eigenvalues are
defined up to a common factor. Let $\frac \l \mu + \frac \mu \l =
\nu (O)$. The number $\frac \l \mu $ is usually called the
characteristic number of $O$, and the number $\nu (O)$ is the
Baum-Bott index.

\begin{thm}   \label{thm:inv} The characteristic numbers
(equivalently, the Baum-Bott indexes) are topological invariants for
generic polynomial \fols of degree $n \ge 2$ in the projective
plane. \end{thm}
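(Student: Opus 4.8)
The plan is to read off the characteristic numbers from the \hol at infinity and then exploit its rigidity. Let $H : \cc P^2 \to \cc P^2$ be a \topeq between two generic \fols $\F, \F' \in \A_n$. First I would check that $H$ respects the natural stratification. Singular points are topologically distinguished — they are exactly the points near which the \fol is not a topological product — so $H$ maps $\mathrm{Sing}\,\F$ onto $\mathrm{Sing}\,\F'$. More importantly, $H(L_\infty)=L'_\infty$: for generic $\F\in\A_n$ the line at infinity is the only invariant algebraic curve, hence the only leaf with compact closure, while every other leaf is dense in $\cc P^2$; thus $L_\infty$ is the unique leaf whose closure is a topological $2$-sphere and is preserved by $H$. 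Consequently $H$ induces a permutation $\sigma$ with $H(O_j)=O'_{\sigma(j)}$ and carries a transversal $\tau$ to $L_\infty$ at a regular base point to a transversal $\tau'$ of $L'_\infty$.

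Next I would pass to \hol. The restriction $H|_\tau$ conjugates the \hol group $G\subset\mathrm{Diff}(\cc,0)$ of $L_\infty$ — generated by the \hol maps $g_1,\dots,g_{n+1}$ around $O_1,\dots,O_{n+1}$, subject to $g_1\cdots g_{n+1}=\mathrm{id}$ — onto the \hol group of $L'_\infty$, matching $g_j$ with $g'_{\sigma(j)}$. The multiplier of $g_j$ is $\exp(2\pi i\,\l_j/\mu_j)$, so determining the germs $g_j$ up to conformal conjugacy recovers the characteristic numbers, hence the $\bbis$, at infinity. The bridge is the rigidity theorem for non-solvable finitely generated subgroups of $\mathrm{Diff}(\cc,0)$ (Nakai, Shcherbakov; see also Ilyashenko's book): a topological conjugacy between two such groups must be conformal or anticonformal near the common fixed point, and the orientation conditions in the definition of \topeq exclude the anticonformal alternative, so $H|_\tau$ is conformal at $0$. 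Here I need the genericity lemma that $G$ is non-solvable for generic $\F\in\A_n$, which is standard. Granting it, the conformal germ $H|_\tau$ conjugates the (generically linearizable) germ $g_j$ to $g'_{\sigma(j)}$, forcing $\exp(2\pi i\,\l_j/\mu_j)=\exp(2\pi i\,\l'_{\sigma(j)}/\mu'_{\sigma(j)})$; the residual ambiguity by an integer is eliminated by a normalization/continuity argument (the characteristic numbers vary continuously and the relevant integers are locally constant, vanishing at $\F=\F'$), giving $\nu(O_j)=\nu(O'_{\sigma(j)})$.

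For the $n^2$ finite \sps I would argue as follows. Near such a point $Q$ the \fol has two separatrices, and the \hol of a small loop around $Q$ inside a separatrix is a germ with multiplier $\exp(2\pi i\,\l_Q/\mu_Q)$, which $H$ conjugates topologically to the corresponding germ at $Q'=H(Q)$. When $\l_Q/\mu_Q$ is real — generically it is then irrational — this germ is topologically an irrational rotation, whose rotation number is a topological invariant, and $\nu(Q)=\nu(Q')$ follows at once. For a \sp with non-real characteristic number (automatically in the Poincaré domain, where two linear germs are \topeq and nothing is locally rigid) I would transport the problem to infinity: a generic leaf $L$ meeting a \nbd of $Q$ is dense, so the loop around $Q$ in $L$ is freely homotopic in $L$ to a loop $c$ lying in a \nbd of $L_\infty$; up to a conformal germ given by \hol along the transport path, the \hol of $c$ equals the $Q$-\hol on one side and is conjugate to an element of $G$ on the other. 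Running the same transport for $\F'$ and using that $H|_\tau$ realizes a conformal conjugacy $G\to G'$, the multipliers at $Q$ and $Q'$ are identified, and, as before, genericity removes the integer ambiguity, giving $\nu(Q)=\nu(Q')$.

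The main obstacle I anticipate is the third step: bridging the completely flexible local \hol at a finite \sp in the Poincaré domain with the rigid \hol at infinity, and carrying out the transport so that the two conformal germs genuinely agree under $H$. Secondary technical points are the precise formulation of the rigidity theorem for \hol pseudogroups defined only on sectors or punctured \nbds rather than as honest germs, and the genericity lemma asserting non-solvability of the \hol group at infinity.
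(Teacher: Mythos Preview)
Your overall strategy---holonomy rigidity at infinity via the Nakai--Shcherbakov theorem---is exactly the paper's. But two steps in your execution do not go through as written.

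\textbf{The transport step.} Your plan for a finite \sp $Q$ in the Poincar\'e domain is to take a ``loop around $Q$ in a generic dense leaf $L$'' and homotope it in $L$ to a loop near $L_\infty$. This is not well-posed: a leaf $L$ that is not the separatrix through $Q$ carries no natural loop around $Q$ (the nearby orbits do not close up, precisely because the \hol is nontrivial), and the separatrix loop $\gamma_Q$ has no reason to be homotopic \emph{within the separatrix} to a loop near infinity. The paper avoids this entirely by transporting not loops but \emph{transversal structure}: once $H$ is \th on a cross-section $\Gamma$ to $L_\infty$, density of every leaf lets you cover any non-singular point $q$ by a flow box one of whose cross-sections lies in $\Gamma$, so $H$ is \th at $q$. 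Thus $H$ is \th everywhere, and in particular the germ it induces on a transversal to the separatrix at $Q$ is holomorphic. This immediately gives analytic conjugacy of the local \hol germs $\Delta_Q$ and $\Delta_{Q'}$, with no case split between real and non-real characteristic numbers and no homotopy of loops.

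\textbf{The integer ambiguity.} Your removal of the $\zz$-ambiguity by ``continuity---the integers are locally constant, vanishing at $\F=\F'$'' is circular: $\F$ and $\F'$ are two fixed \fols, not members of a one-parameter family, so there is no deformation along which to track an integer. (The irrational-rotation argument in the real case has the same defect: the rotation number only sees $\l/\mu$ modulo $\zz$.) The paper handles this by invoking a result of Ilyashenko and Naishul': if two planar \fols near a nondegenerate \sp are \topeq \emph{and} their \hol germs are analytically conjugate, then the characteristic numbers coincide on the nose, not merely modulo $\zz$. This is a genuine local theorem that you need to cite; it is not a continuity statement.

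With these two fixes---global extension of transversal holomorphy, and the Ilyashenko--Naishul' proposition---your outline becomes the paper's proof.
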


This theorem is proved in Section~\ref{sec:inv}.
The genericity assumptions are specified below, in subsection
\ref{sub:gen}.

Let $v$ be a polynomial \vf of degree $n$,  $$M(v) = (\nu (O_1),
\dots , \nu (O_N)) \in \cc^N$$ be the tuple of the \bbis for its
singular points; recall that $N = n^2 + n + 1$.

Affine equivalent \vfs as well as those that differ by a constant
factor, have the same \bbis at the corresponding singular points.
 Hence, the map $v \mapsto (\nu (O_1), \dots , \nu (O_N))$ may be
descended  to a map $$\mu : \t \A_n = \A_n/\mbox{Aff}\cc^2 \times \cc^* \to
\cc^N.$$

This map is called \emph{the moduli map}, see \cite{M06}, and its
image is denoted by $\M_n$. In \cite{LN}, the moduli map is called
the \emph{Baum-Bott} map. We preserve the term introduced in
\cite{M06}, in order to emphasize the relation of the map $\mu $ with
the moduli of topological classification of foliations.

\begin{thm}  \label{thm:mod} The moduli map is algebraic. For
generic \qvf $v$, the derivative of the moduli map at $v$ has the
full rank. \end{thm}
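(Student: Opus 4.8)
The plan is to split the statement into its two halves, which are quite different in character. For the algebraicity of $\mu$, the strategy is to exhibit the graph of the moduli map as a (constructible) algebraic set. The singular points $O_1,\dots,O_N$ of a generic \qvf $v$ are the common zeros of polynomial equations with coefficients depending polynomially on the coefficients of $v$, and by genericity they are simple zeros; the eigenvalues $\l,\mu$ at each $O_i$ are the roots of the characteristic polynomial of the linearization, so the symmetric function $\nu(O_i)=\l/\mu+\mu/\l=(\l^2+\mu^2)/\l\mu = ((\l+\mu)^2-2\l\mu)/\l\mu$ is a rational function of the trace and determinant of the linearization, hence a rational function of the coefficients of $v$. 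Thus on the Zariski-open set of generic vector fields the assignment $v\mapsto M(v)$ is a morphism; passing to the quotient by the algebraic group $\mathrm{Aff}\,\cc^2\times\cc^*$ (whose action is algebraic and, on the generic stratum, with quotient a quasi-projective variety $\t\A_n$) yields that $\mu$ is algebraic. One should be slightly careful that $\l\mu\ne 0$ on the generic stratum — this is part of the \gas, since a zero eigenvalue would make $O_i$ non-elementary — and that the group action has the expected dimension count so that $\dim\t\A_n$ is as claimed. I will record these as consequences of the \gas of subsection~\ref{sub:gen}.

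For the full-rank statement, the approach is to compute, or rather to estimate the rank of, the derivative $d\mu_v$ at a conveniently chosen generic $v$. Since algebraicity is already in hand, the locus where $d\mu$ drops rank is a proper Zariski-closed subset of $\t\A_n$ as soon as it is nonempty elsewhere; so it suffices to produce \emph{one} quadratic vector field at which the rank is full. The natural candidate is a vector field all of whose singular points are as decoupled as possible — for instance a suitable perturbation of a field with an invariant line at infinity whose singularities we can read off explicitly, or a field in ``general position'' (e.g. with generic linear part and generic quadratic terms) for which the positions $O_i$ and the local invariants $\nu(O_i)$ can be differentiated independently. Concretely, I would parametrize $\t\A_n$ near $v$ by a transversal to the group orbit and compute $\partial\nu(O_i)/\partial(\text{parameters})$, using that moving a single coefficient of $v$ perturbs the $O_i$ and the linearizations in a way governed by the implicit function theorem. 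A clean way to organize this is via the Baum–Bott / index theorems: the tuple $(\nu(O_1),\dots,\nu(O_N))$ is constrained by the global relations coming from the Baum–Bott index theorem on $\cc P^2$ (the sum, weighted appropriately, being a fixed constant), which already tells us the image $\M_n$ lies in a proper subvariety; the claim of full rank must therefore be interpreted as full rank onto (the tangent space of) $\M_n$, i.e. $\mathrm{rk}\, d\mu_v = \dim \t\A_n = \dim \M_n$.

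The main obstacle I anticipate is precisely this rank computation: showing that the $N$ Baum–Bott indices, viewed as functions on the $\dim\t\A_n$–dimensional moduli space, are ``functionally independent up to the Baum–Bott relations,'' with no further hidden coincidences. For $n=2$ one has $N=7$ singular points and $\dim\t\A_2 = \dim\A_2 - \dim(\mathrm{Aff}\,\cc^2\times\cc^*) = 12 - 7 = 5$ (a \qvf has $12$ coefficients), while the Baum–Bott relation(s) cut $\M_2$ down to the expected dimension; one must verify the Jacobian of the seven functions $\nu(O_i)$ has rank $5$. I expect this to be handled either by an explicit symbolic computation at a single well-chosen $v$ — feasible since everything is rational in the coefficients — or, more elegantly, by a degeneration/limiting argument: take $v$ degenerating so that the seven singular points split into well-separated clusters, each governed by an almost-independent set of local parameters, and show the limiting Jacobian already has the required rank, which then persists on a Zariski-open set. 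The delicate point in the degeneration argument is controlling the singular points that escape to infinity or collide, so that the local analytic invariants remain well-defined in the limit; this is where the \gas will have to be invoked most carefully. Once the rank is full at one point, Theorem~\ref{thm:mod} follows for generic $v$ by the semicontinuity of rank on the algebraic variety $\t\A_n$.
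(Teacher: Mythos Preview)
Your treatment of algebraicity is fine and essentially what anyone would do; the paper takes it for granted. Your dimension count $\dim\t\A_2=12-7=5$ matches the paper's.

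Where your proposal falls short is the full-rank claim: you correctly reduce to exhibiting a single $v$ where $\mathrm{rk}\,d\mu_v=5$, but you do not actually exhibit one. You offer two options---an explicit Jacobian computation at a well-chosen $v$, or a degeneration argument---and carry out neither. The first option is exactly what \cite{M06} did, and it required computer assistance; the paper explicitly sets out to replace that by an analytic argument. The second option, as you yourself note, is delicate precisely where it matters (colliding or escaping singular points), and you give no indication of how to control it. So as written, this is a plan rather than a proof.

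The paper's argument for full rank is genuinely different and avoids any Jacobian computation. It argues by contradiction: if the rank dropped everywhere, then through every point of $\t\A_2$ the fiber of $\mu$ would have positive dimension. Choose $v_0$ to be a \qvf with three invariant lines $x=0$, $y=0$, $x+y=1$, and take an analytic curve $\{v_\e\}$ in the fiber through $v_0$. The key step (Proposition~\ref{prop:inv}) is that constancy of the Baum--Bott indices forces the three invariant lines to persist for all $v_\e$: the Camacho--Sad relation \eqref{eqn:cs1} cuts out a codimension-one set containing the ``invariant line'' locus, and by irreducibility near $v_0$ the two coincide locally. Once all $v_\e$ have the same three invariant lines, they admit a Darboux first integral $x^{a(\e)}y^{b(\e)}(x+y-1)$ whose exponents are characteristic numbers, hence constant along the fiber; so all $v_\e$ define the same foliation, a contradiction.

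The upshot: your reduction is correct, but the missing ingredient is an actual mechanism forcing rigidity along a fiber. The paper supplies one via Camacho--Sad plus Darboux integrability at a carefully chosen basepoint, which is both shorter and computation-free compared to the route you sketch.
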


This theorem was proved in \cite{M06} by a computer assisted
calculation. At the same time, it follows from the result proved
analytically in \cite{G06}. In Section~\ref{sec:mod} we present a shorter analytic proof,
yet based on the idea suggested in \cite{G06}.
Lins Neto \cite{LN} considered a \mm of a larger space, namely, of a
space on \fols on  $\cc P^2$ that have $2$ tangent points with
generic lines, and not necessarily have the invariant line at infinity.

The class $\t \A_2$ is a subset of this space. Lins Neto proved that
the degree of the \mm on this larger space is exactly $240$. Hence,
the degree of the restriction of this map to $\t \A_2$ is no greater
than $240$.

Therefore, Theorems \ref{thm:inv} and \ref{thm:mod} imply
Theorem~\ref{thm:tr} and  Theorem~\ref{thm:strong}.

Indeed, the \mm is algebraic and of full rank at generic points.
Hence, the critical locus of the \mm is a proper algebraic
submanifold. It may contain an algebraic subset $\Sigma $ that is
blown down by the moduli map. This means that for any $\F \in \Sigma
, \mu^{-1}(\mu (\F (0))$ is an algebraic subset of positive
dimension, which is squeezed to a point by $\mu $. Any point $\F $
from $\t \A_2 \setminus \Sigma $ has no more than $240$ points in
the set $\mu^{-1}(\mu (\F (0))$. If, in addition, $\F $ satisfies the
genericity assumptions of Subsection~\ref{sub:gen},
then $\F $ is totally rigid and \topeq to no more than $240$
pairwise affine non-equivalent foliations.

\section{Topological invariance of Baum-Bott indexes
for generic foliations}  \label{sec:inv}

In this and the next section we prove an improved version of
Theorem~\ref{thm:inv}, Theorem~\ref{thm:sstrong} below,
and provide the genericity assumptions.

\subsection{Genericity assumptions} \label{sub:gen} Consider a
\fol $\F \in \A_n$ \st

 - it has exactly $n + 1$ singular points at infinity and $n^2$
 singular points in the fixed affine \nbd $\mathbb C^2$;

 - the monodromy group at infinity is non-solvable;

 - all the leaves are dense.

Let us check than these are genericity assumptions indeed.

The genericity of assumption on the singular points follows from the
Bezout Theorem. The fact that generic polynomial \vfs of degree $n
\ge 2$ have non-solvable monodromy at infinity was proved by several
authors \cite{S84}, \cite{LNSS99}, see also \cite{S06}. For \qvfs a
much stronger result is known.

\begin{thm} [Pyartli, \cite{P06}] \label{thm:piar} For any $\l =
(\l_1, \l_2, \l_3), \ \l_1 + \l_2 + \l_3 = 1$ denote by $\B_\l $ the
set of all \qvfs with the characteristic numbers $\l_1, \l_2, \l_3$
of the singular points at infinity. Let all the $\l_j$ be non-real.
Then any set $\B_\l $ contains no more than seven classes of affine
equivalence whose points correspond to \fols with solvable monodromy
group at infinity.   \end{thm}

For future use, denote the representatives of these $7$ classes
of affine equivalence by $A_j(\l ), \ j = 1, \dots , 7$.

Pyartli theorem stated in \cite{P06}, Theorem 2, is stronger than
Theorem~\ref{thm:piar}. Namely, the numbers $\l_j$ may be real, but
the following restrictions hold. If $\mbox{Re }\l_1 \ge \mbox{Re
}\l_2 \ge \mbox{Re }\l_3$, then $\l_1, \l_2 \notin \frac 1 4 \mathbb
Z \cup \frac 1 6\mathbb Z, \ \l_3 \ne \frac 1 3, \ \l_3 \ne \frac 1
4$. But this improvement of Pyartli theorem does not improve our
result, because density property requires that the numbers $\l_j$
are non-real.

Density of leaves requires non-solvable monodromy at infinity plus
hypebolicity of singular points at infinity \cite{S84}, \cite{N94},
\cite{S06}. The genericity of the first property was already
discussed. The second one determines real Zariski open set.

\subsection{Improvement of the main result}  \label{sub:impr}

We can now include genericity assumptions stated
above in Theorem ~\ref{thm:strong}. This gives us the following result.

\begin{thm} \label{thm:sstrong} For any tuple $\l = \l_1, \l_2, \l_3
, \ \l_1 + \l_2 + \l_3 = 1, \ \mbox{Im } \l_j \ne 0$, in any set
$\B_\l$ of \qvfs with the tuple of eigenvalues of singular points at
infinity equal to $\l $, \tes a Zarisski open subset $T_\l \subset
\B_\l $  \st any \fol $\F \in T_\l $ is \topeq to no more than $240$
\fols of class $\A_2$, modulo affine equivalence. The difference
$\B_\l \setminus T_\l $ equals to $ (\cup_1^7A_j(\l ))\cup \Sigma_\l
$, where $\Sigma_\l $ is the set of all points in $\B_\l $ that
belong to a subset, which is blown down by the moduli map. \end{thm}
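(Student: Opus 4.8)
The plan is to assemble Theorem~\ref{thm:sstrong} as a packaging of Theorems~\ref{thm:inv}, \ref{thm:mod} and the Pyartli Theorem~\ref{thm:piar}, refining the deduction already sketched in Subsection~\ref{sub:main} by keeping track of the extra parameter $\l$. First I would fix a tuple $\l=(\l_1,\l_2,\l_3)$ with $\l_1+\l_2+\l_3=1$ and all $\mathrm{Im}\,\l_j\ne 0$, and consider the slice $\B_\l\subset\A_2$. Since the characteristic numbers at infinity are pinned down, the singular points at infinity are automatically hyperbolic (non-real eigenvalue ratios), so one of the three genericity conditions of Subsection~\ref{sub:gen} holds on all of $\B_\l$ for free. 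For the non-solvability of the monodromy at infinity, Pyartli's theorem says exactly that the bad locus inside $\B_\l$ is contained in the finite union $\cup_{j=1}^{7}A_j(\l)$ of affine-equivalence classes; removing it leaves a Zariski-open set on which the monodromy is non-solvable, and hence (by \cite{S84}, \cite{N94}, \cite{S06}, since hyperbolicity at infinity already holds) all leaves are dense. Thus on $\B_\l\setminus\cup_{j=1}^7 A_j(\l)$ all three genericity assumptions are satisfied.

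Next I would invoke Theorem~\ref{thm:mod}: the moduli map $\mu$ is algebraic and has full-rank derivative at a generic quadratic field; restricting attention to $\B_\l$, the critical locus of $\mu|_{\B_\l}$ is a proper algebraic subset, and within it sits the subset $\Sigma_\l$ of points lying on a positive-dimensional fiber-component that is blown down by $\mu$. Off $\Sigma_\l$, the fibers of $\mu$ are finite; by Lins Neto's computation \cite{LN} that the degree of the Baum-Bott map on the ambient space of foliations with two tangency points with generic lines is exactly $240$, and since $\t\A_2$ embeds in that space, every such finite fiber has at most $240$ points. Define
\[
T_\l \;=\; \B_\l \setminus\Bigl(\bigl(\textstyle\bigcup_{j=1}^{7}A_j(\l)\bigr)\cup\Sigma_\l\Bigr),
\]
which is Zariski open in $\B_\l$ (complement of finitely many affine classes together with a proper algebraic set). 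For $\F\in T_\l$ all three genericity assumptions hold, so Theorem~\ref{thm:inv} (and its strengthening) applies: the Baum-Bott indexes are topological invariants, hence any foliation of class $\A_2$ topologically equivalent to $\F$ has the same image under $\mu$, i.e.\ lies in the fiber $\mu^{-1}(\mu(\F))$. Since $\F\notin\Sigma_\l$, that fiber is finite and of cardinality at most $240$; projecting back to affine-equivalence classes shows $\F$ is topologically equivalent to at most $240$ quadratic vector fields modulo affine equivalence. This also identifies $\B_\l\setminus T_\l$ with $(\cup_{1}^{7}A_j(\l))\cup\Sigma_\l$, as claimed.

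The step I expect to be the main obstacle is the passage from ``same Baum-Bott indexes'' to ``at most $240$ affine classes'' with the genericity bookkeeping done on the slice $\B_\l$ rather than on all of $\t\A_2$: one must be sure that the blown-down locus $\Sigma$ of $\mu$ on $\t\A_2$ intersects $\B_\l$ in a genuinely proper subset $\Sigma_\l$ (so that $T_\l$ is nonempty and Zariski open), and that the degree bound of \cite{LN}, established for the larger Baum-Bott map, descends to give a bound on the cardinality of generic fibers of $\mu|_{\t\A_2}$, not merely of the ambient map. This requires knowing that the restriction map does not acquire new positive-dimensional fibers beyond those already recorded in $\Sigma$, which is precisely what Theorem~\ref{thm:mod} (full rank of $D\mu$ generically) supplies once combined with the fact that $\mu$ is algebraic. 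The remaining ingredients — hyperbolicity at infinity from non-real $\l_j$, Pyartli's seven exceptional classes, density of leaves — are quoted results and enter only to verify that $T_\l$ lies inside the genericity stratum of Subsection~\ref{sub:gen}.
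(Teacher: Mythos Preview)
Your proposal is correct and follows essentially the same route as the paper: the paper derives Theorem~\ref{thm:sstrong} directly from Theorem~\ref{thm:mod} and Theorem~\ref{thm:inv1} (the sharpened form of Theorem~\ref{thm:inv}), combined with Pyartli's Theorem~\ref{thm:piar} to pin down the exceptional solvable-monodromy locus inside $\B_\l$ and Lins Neto's degree bound for the fiber cardinality, exactly as you outline. Your write-up is in fact more explicit than the paper's own deduction (which is the single sentence ``This theorem follows from Theorem~\ref{thm:mod} \dots and Theorem~\ref{thm:inv1}'' together with the sketch in Subsection~\ref{sub:main}); the potential obstacle you flag---that $\Sigma_\l$ be a \emph{proper} subset of $\B_\l$---is not argued in any more detail in the paper either.
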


This theorem follows from Theorem~\ref{thm:mod} proved in  Section \ref{sec:mod},
 and Theorem~\ref{thm:inv1}, improved version of Theorem~\ref{thm:inv}.
 The latter theorem is proved in this section.

As noticed in \cite{LN}, the moduli map blows down the family of
quadratic Darboux foliations with the first integral of the form
$(xy + x + y){(x - ky)}^\alpha = c$. In this family $\alpha $ is
fixed, and $k \in \cc $ is a parameter.

\subsection{Invariance Theorem}  \label{sub:inv1}

\begin{thm}  \label{thm:inv1} Suppose that \fol $\F \in \A_n$
satisfies \gas of subsection~\ref{sub:gen}. Then its \bbis are
topological invariants in the following sense. Let $\G \in \A_n$ be
\topeq to $\F $. The conjugacy induces a bijection $h: \mbox{sing }
\F \to \mbox{sing } \G $. Then for all $j = 1, \dots , N, \ \nu
(O_j,\F ) = \nu (h(O_j),\G )$. \end{thm}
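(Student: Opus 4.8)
The plan is to recover each Baum-Bott index from purely topological data attached to the singular point, so that a topological conjugacy $h$ between $\F$ and $\G$ automatically matches the indices. I would split the singular points into two groups: the $n+1$ points at infinity, and the $n^2$ points in the affine chart, and argue somewhat differently for the two groups. For the points at infinity, the key structure is the monodromy group of the leaf at infinity (the invariant line $L_\infty$), together with the local holonomy maps around each $O_i \in L_\infty$. The conjugacy $h$ carries the invariant line $L_\infty$ of $\F$ to the invariant line of $\G$ (this uses the genericity assumptions — density of leaves and non-solvable monodromy force $L_\infty$ to be distinguished topologically, e.g. as the unique algebraic leaf, or via the failure of local density near it), and it conjugates the monodromy group of $\F$ at infinity to that of $\G$. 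Near $O_i$ the holonomy along a small loop in $L_\infty$ is a germ of a conformal map with multiplier $\exp(2\pi i \kappa_i)$ where $\kappa_i$ is the characteristic number; since $h$ conjugates these holonomy germs, and the multiplier of a germ of a holomorphic map at a fixed point is a topological invariant when the map is not a rational rotation (which is guaranteed here because $\mathrm{Im}\,\l_j \neq 0$, hence $|\exp(2\pi i \kappa_i)| \neq 1$, i.e. the germ is attracting/repelling), the characteristic number at each $O_i$ is preserved, hence so is $\nu(O_i)$.

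For the $n^2$ affine singular points, I would use the standard fact that a generic polynomial foliation has only nondegenerate (hyperbolic or, at worst, non-resonant saddle/node) singularities, whose topological type in a punctured neighborhood already pins down the characteristic number up to the symmetry $\l/\mu \leftrightarrow \mu/\l$ — and $\nu(O) = \l/\mu + \mu/\l$ is exactly the symmetric function, so it is determined outright. Concretely: if the point is a node or a hyperbolic attractor/repeller, the ratio of eigenvalues governs the analytic (and in the resonant-free case, topological) normal form; if it is a saddle, one uses the holonomy of the two separatrices, whose multipliers are $\exp(2\pi i \l/\mu)$ and $\exp(2\pi i\mu/\l)$, again topological invariants since genericity excludes the resonant case $\l/\mu \in \mathbb{Q}_{<0}$. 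The conjugacy $h$ maps separatrices to separatrices and conjugates these holonomies, so the unordered pair $\{\l/\mu,\mu/\l\}$ — equivalently $\nu(O)$ — transfers. One must also check that $h$ cannot send an affine singular point to one at infinity; this again follows from the genericity assumptions, since the local dynamics (and the global position relative to the unique invariant algebraic line) distinguish the two families.

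The main obstacle — and the step that requires genuine care rather than citation — is establishing that the multiplier of the holonomy (equivalently of a separatrix holonomy germ) is a \emph{topological} invariant in the present setting, and that the relevant holonomy germs are indeed non-exceptional (neither linearizable elliptic rotations nor parabolic). The classical result (Naishul, or the Mattei–Moussu / Nakai circle of ideas) gives topological invariance of the multiplier for germs of holomorphic diffeomorphisms of $(\cc,0)$; I would invoke it, but I must first verify its hypotheses hold uniformly: the condition $\mathrm{Im}\,\l_j\neq 0$ gives hyperbolic (attracting or repelling) holonomy at infinity, and the genericity assumptions of Subsection~\ref{sub:gen} — in particular non-solvability of the monodromy group and the absence of resonances — are what rule out, for the affine points, the parabolic and the exceptional resonant cases where the multiplier could fail to be a topological invariant. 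A secondary technical point is the global one: checking that the conjugacy respects the partition $\{$points at infinity$\} \sqcup \{$affine points$\}$ and maps $L_\infty(\F)$ to $L_\infty(\G)$; I expect this to follow cleanly from density of leaves off $L_\infty$ together with the fact that $L_\infty$ is the unique proper (non-dense) leaf, but it should be stated explicitly as a lemma before the eigenvalue argument.
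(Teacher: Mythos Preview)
Your approach has a genuine gap at its core: the claim that the multiplier of an individual hyperbolic germ $(\cc,0)\to(\cc,0)$ is a topological invariant is false. Any two attracting (respectively, repelling) holomorphic germs are topologically conjugate, so the multiplier cannot be read off from the topological conjugacy class of a single holonomy map. Likewise, your assertion that the local topological type of a nondegenerate affine singularity pins down the characteristic number up to $\l/\mu\leftrightarrow\mu/\l$ is incorrect: \emph{all} hyperbolic planar singularities are locally topologically equivalent --- this is precisely the content of the Remark the paper places after its proof. The results you cite do not say what you need here; Nakai's relevant theorem is about non-solvable \emph{groups} of germs, not individual maps, and Naishul's result enters only at a later stage and under an additional analytic hypothesis.

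The paper's proof repairs this by a global-to-local mechanism you never invoke. First, the \homeo $H$ conjugates the entire monodromy \emph{group} at infinity; since that group is non-solvable (one of the genericity assumptions), the Shcherbakov--Nakai rigidity theorem forces the conjugating germ on a cross-section to $L_\infty$ to be \emph{holomorphic}, not merely continuous. Second, density of the leaves (another genericity assumption) is used to propagate this transversal holomorphy from that one cross-section to every non-singular point of $\F$. Only then, with $H$ transversally holomorphic everywhere, can one conclude that the separatrix holonomies at each singular point --- affine and infinite alike, treated uniformly --- are \emph{holomorphically} conjugate, whence their multipliers agree. This yields the characteristic numbers modulo $\zz$, and Proposition~\ref{prop:forced} (which needs the analytic conjugacy of the holonomy as a hypothesis) removes the integer ambiguity. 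Your split into infinite versus affine points, and the purely local argument at the latter, bypasses exactly the step --- upgrading to transversal holomorphy via the group rigidity plus density --- that makes the whole argument work.
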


\begin{proof}
{\bf Step $1$. Conjugacy of monodromy maps.} Let $\F $ and $\G $ be
conjugated by a \homeo $H$. Then $H$ topologically conjugates their
monodromy groups at infinity, denoted by $G_\F $ and $G_\G $, see
\cite{IYa07} Proposition 28.2. In more details, for any set $f_1,
\dots , f_n$ of generators of $G_\F $ \tes a set of generators $g_1,
\dots , g_n$ of $G_\G $ and a germ of a \homeo $h: (\cc ,0) \to (\cc
,0)$ \st

\begin{equation} \label{eqn:conj} f_j \circ h = h \circ g_j, \ j =
1, \dots , n. \end{equation}

\begin{thm}[\cite{S84}, \cite{N94}] \label{thm:conj} If two finitely
generated non-solvable groups of germs $(\cc ,0) \to (\cc ,0)$ are
topologically conjugated by an orientating preserving homeomorphism,
then this \homeo is in fact holomorphic. \end{thm}

{\bf Step $2$. Induced maps of cross-sections and transversal
holomorphy.} Arguments of step $2$ and $3$ are very close to those
of \cite{IYa07}, Lemma $28,24$. Yet we can not literally refer to
the lemma, so we present the arguments here.

Suppose that two \fols $\F $ and $\G $ are topologically equivalent.
Then for any point $p$ and any two germs of cross-sections: $\Gamma
$ at $p$, $\Gamma' $ at $p' = H(p)$, $\Gamma $ and $\Gamma' $ being
transversal to the leaves  of the \fols $\F $ and $\G $
respectively, \tes an induced germ of the \homeo \begin{equation}
\label{eqn:indu} h: (\Gamma ,p) \mapsto (\Gamma' ,p') \end{equation}
defined in the following way. Consider two flow boxes: of the
foliation $\F $ near $p$ and of the foliation $\G $ near $p'$. The
local leaves in these flow boxes are in one to one correspondence
with subdomains of $\Gamma $ and $\Gamma' $ respectively. The \homeo
$H$ sends the leaves of the first flow box to those of another one.
This induces the \homeo \eqref{eqn:indu}.

Note that germ $h$ that conjugates the monodromy groups in
\eqref{eqn:conj} is induced in a sense of the previous paragraph.

The induced germs respect the holonomy. namely, let $\g $ be a
nontrivial loop with the endpoint $p$ on a leaf of $\F $, and
$\Delta_{\g , \F }: (\Gamma ,p) \to (\Gamma ,p)$ be the germ of its
\hol transformation. Let $\g'  = H\g $, and $\Delta_{\g' ,\G }:
(\Gamma' ,p') \to (\Gamma' ,p')$ be the corresponding holonomy. Let
$h$ be the induced germ from \eqref{eqn:indu}. Then \begin{equation}
\label{eqn:conj1} h \circ \Delta_{\g ,\F } = \Delta_{\g' ,\G } \circ
h. \end{equation}

\begin{defin}  A \homeo $H$ that conjugates two \fols is called
\emph{transversally holomorphic}, if all the induced germs
\eqref{eqn:indu} are holomorphic. It is called \th at $p$, if the
germ $h$ in \eqref{eqn:indu} is holomorphic. \end{defin}

{\bf Step $3$. Extending transversal holomorphy.}
Theorem~\ref{thm:conj} implies that the \homeo $H$ is \th near
non-singular points of the infinity leaf. Density of leaves allows
us to extend the transversal holomorphy to all the non-singular
points of $\F $. We will use the following obvious remark. \emph{If
a \homeo is \th at all points of one cross-section of a flow box,
then it is \th at all points of this flow box.}

Now let us extend the transversal holomorphy of $H$ to all the
non-singular points of $\F $. Take any such point $q$ and a curve
$\g $ on the leaf passing through $q$ that connects $q$ with some
point $p \in \Gamma $, where $\Gamma $ is a cross-section to the
infinite leaf of $\F $, on which the relation \eqref{eqn:conj}
holds. Recall that $h$ in \eqref{eqn:conj} is a conformal map
induced by $H$. We may assume that $\g $ is non-self intersecting.
In the opposite case we delete from $\g $ all the loops produced by
self intersections. Then \tes a \nbd of $\g $ on the leaf of $\F $
which is biholomorphic equivalent to a disk. This \nbd may be
included in a flow box, for which one of the cross-sections belongs
to $\Gamma $. This allows us to conclude that $H$ is \th at $q$.
Hence, $H$ is \th everywhere.

{\bf Step $4$. Topological invariance of Baum-Bott indexes.}  For
any nondegenerate singular point $O$ of a complex planar foliation,
\tes a holomorphic separatrix $S$ through $O$. For the case when the
characteristic number is non-positive, this follows from the complex
version of the Hadamard-Perron theorem, \cite{IYa07}, theorem $7.1$.
For the case when this number is positive, this follows from the
Poincar\' e-Dulac theorem, ibid., Theorem $5.5$. Consider  a
nontrivial small loop $\g_O$ on $S$ around $O$ and a corresponding
holonomy transformation $\Delta_O$. It is well known that $\D'_O(0)
= \exp 2\pi i\frac \l \mu $ where $\frac \l \mu $ is a
characteristic number of $O, \ \mu $ corresponds to the tangent
vector to $S$ at $O$. Let $O' = H(O), \ \frac {\l' }{\mu' }$ be the
corresponding characteristic number, and $\D_{O'}$ be a holonomy map
of $\G $ corresponding to the loop $\g' = H(\g )$. The \homeo $H$ is
transversally holomorphic. Hence, the maps $\D_O$ and $\D_{O'}$ are
complex conjugated. This implies the coincidence of their
derivatives at zero. Hence, the characteristic numbers of $O$ and
$O'$ coincide modulo $\zz $. This is almost the desired statement.

\begin{prop} [\cite{I78}, \cite{Na82}]  \label{prop:forced} Suppose that two
planar \fols in a \nbd of a non-degenerate singular point are
topologically equivalent, and the corresponding holonomy maps are
analytically conjugate. Then the characteristic numbers of these
singular points coincide. \end{prop}

Together with the previous arguments, this proposition implies the
theorem. \end{proof}

\begin{rem} Note that all hyperbolic singular points of planar \fols
are topologically equivalent.

In particular, two singular points whose characteristic numbers
differ by a nonzero integer are topologically equivalent.
Proposition~\ref{prop:forced} claims that this equivalence can not
be transversally holomorphic.    \end{rem}

\section{Moduli map for quadratic vector fields}  \label{sec:mod}

In this section we will check that the dimensions of the factorized
space of \qvfs and the moduli space are both equal $5$. We will
prove that there exists at least one point where the \mm has the
full rank. This will imply Theorem~\ref{thm:mod}.

\subsection {Counting dimensions} \label{sub:count} The space of all
quadratic polynomial in the plane has dimension $6$. Hence, the
space of all planar \qvfs equals $12$. The affine group action on the phase
space induced a transformation on the space of quadratic vector fields.
Moreover, multiplication of a \vf by a nonzero number preserves the foliation.
The affine group has dimension $6$. Hence, $${\mbox{dim}}_\cc \A_2/\mbox{Aff}\ \cc^2
\times \cc^* = 5.$$ This is the ``effective'' dimension of the
space of quadratic vector fields. Denote by $\t \A_2$ the
factor-space above: \begin{equation} \label{eqn:factor} \t \A_2 =
\A_2/\mbox{Aff}\ \cc^2 \times \cc^*. \end{equation} Any class of
\vfs from $\t \A_2$ with at least three singular points in $\cc^2$
has a \emph{regular representative} with singular points $(0,0),
(2,0), (0,2)$.

The moduli space for \qvfs belongs to $\cc^7$. Yet it is subject to
at least two relations. The Baum-Bott equality \cite{BB72} for \qvfs
in $\cc P^2$ implies: \begin{equation}   \label{eqn:bb} \sum_1^7 \nu
(O_j) = 2.  \end{equation}

On the other hand, for the singular points at infinity
\begin{equation} \label{eqn:cs} \sum_1^3 \frac {\l_j}{\mu_j}(O_j) =
1. \end{equation} Here $\mu_j$ are eigenvalues that correspond to eigenvectors
tangent to the line at infinity.

This equality is usually called Camacho-Sad \cite{CS82}.

So, the dimension of the image of the \mm is no more than $5$. We
will prove that at a special point the \mm has rank $5$ indeed.
As mentioned before, this will imply Theorem~\ref{thm:mod}.

\subsection{Counting the rank}  \label{sub:rk} In this subsection we
will prove Theorem~\ref{thm:mod}.

\begin{proof} The idea of the proof goes back to \cite{G06}. In
fact, Theorem~\ref{thm:mod} was proved in \cite{G06}, but not
explicitly stated. Our proof is shorter and more explicit.

Suppose that, contrary to the statement of Theorem \ref{thm:mod},
rank of the moduli maps drops everywhere. Then the fibers of this
map are analytic sets of dimension at least one. Take such a fiber
passing through a special vector field $v_0$ chosen below. By the
curve selection lemma, \tes an analytic curve $\g = \{ v_\e | \e \in
(\cc ,0)\} $ \st $M(v_\e ) \equiv M(v_0), \ v_\e  \not \equiv v_0$.
Note that the \bbis for all the fields $v_\e $ are the same.

Take a \qvf $v_0$ having three invariant lines. It is affine
equivalent to a field with invariant lines $ x = 0, y = 0, x + y =
1$. Without loss of generality, we may assume that all the vector
fields $v_\e$ have singular points $(0,0), (0,1), (1,0)$. This may
be achieved by an affine transformation.

\begin{prop} \label{prop:inv} The assumption $M(v_\e ) \equiv M(v)$
implies that the fields  $v_\e$  have the same invariant lines as
$v_0$ for all $\e$. \end{prop}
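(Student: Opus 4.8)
The plan is to exploit the rigidity of the configuration of singular points. A generic member of $\A_2$ has $7$ singular points, $3$ at infinity and $4$ in $\cc^2$, but $v_0$ is degenerate: because $v_0$ has three invariant lines $x=0$, $y=0$, $x+y=1$, the three pairwise intersection points $(0,0)$, $(1,0)$, $(0,1)$ of these lines are singular, and the fourth affine singular point lies on none of them (for a generic such $v_0$). The key structural input I would use is that each invariant line of a polynomial foliation passes through exactly two of the affine-plus-infinite singular points — more precisely, the restriction of the foliation to an invariant algebraic curve detects that curve through the Camacho–Sad index and through the fact that the holonomy of a loop around the singular point inside the line is governed by one of the two characteristic numbers at that point. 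So the first step is to pin down, for the family $v_\e$, which singular points persist. Since $M(v_\e)\equiv M(v_0)$, the Baum–Bott indices $\nu(O_j)$ never change; in particular the \bbi at $(0,0)$, which for $v_0$ is a node/saddle formed by the two invariant lines $x=0$ and $y=0$, is forced to keep a value that (generically in the choice of $v_0$) can only be realized by a singular point through which two invariant lines pass — this is the content I would extract to force the invariant lines to survive.

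Concretely, I would proceed as follows. First, normalize so that all $v_\e$ share the singular points $(0,0),(1,0),(0,1)$, as already arranged in the text. Second, observe that at $(0,0)$ the characteristic number of $v_0$ associated to the eigendirection along $x=0$ and the one along $y=0$ are both determined, up to the Camacho–Sad-type constraints, by $\nu((0,0))$ together with the local picture; as $\e$ varies the two eigendirections at $(0,0)$ vary continuously but the product/sum of characteristic numbers is frozen. Third — the crucial step — I would invoke that a characteristic number equal to a specific value (generic for $v_0$) at a singular point of a quadratic foliation forces the existence of a separatrix that is a straight line: a quadratic vector field has only finitely many algebraic leaves of low degree, and a one-parameter family all sharing the same Baum–Bott data around $(0,0),(1,0),(0,1)$ must carry the same invariant lines through those points, because the alternative (a nonlinear separatrix, or a separatrix sweeping out a genuinely $1$-parameter family) is incompatible with the algebraicity of the foliation together with the frozen moduli. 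Then the same argument at $(1,0)$ and $(0,1)$ nails down $x=0$, $y=0$, $x+y=1$ as invariant for every $v_\e$.

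The step I expect to be the main obstacle is exactly the third one: turning "the Baum–Bott indices at the three distinguished points are frozen" into "the three invariant lines persist." The honest way to do this is to write $v_\e = v_0 + \e w + O(\e^2)$ in the regular-representative coordinates, impose that $x=0$, $y=0$, $x+y=1$ need not be invariant for $w$, and derive a contradiction with $M(v_\e)\equiv M(v_0)$ by a first-order computation of how $\nu$ at, say, $(0,0)$ moves when the line $x=0$ ceases to be invariant — showing the derivative of $\nu((0,0))$ along such a deformation is nonzero. Alternatively, and perhaps more cleanly, one argues globally: the foliation $v_0$ is Darboux with first integral $x^a y^b (x+y-1)^c$, the singular points $(0,0),(1,0),(0,1)$ are precisely the pairwise intersections of its invariant lines, and any nearby foliation with the same $7$-tuple of \bbis must again have these three points as intersections of invariant lines, since a quadratic foliation with a prescribed node-type singularity at a prescribed point, whose \bbi equals that of $v_0$, is forced into the Darboux class by a dimension count (the three invariance conditions cut out a subvariety on which the remaining moduli are free, and $v_\e$ is confined to a fiber of $\mu$ inside it). I would present the local first-order argument as the primary route, since it is self-contained and avoids quoting a classification of quadratic Darboux foliations, and reserve the global dimension count as a remark. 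Once the invariant lines persist, Proposition~\ref{prop:inv} is proved, and the subsequent steps (deducing that the whole family $v_\e$ is Darboux with those three lines, hence that $M(v_\e)\equiv M(v_0)$ forces $v_\e\equiv v_0$ up to the allowed equivalences, contradicting $v_\e\not\equiv v_0$) follow.
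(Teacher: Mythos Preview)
Your proposal correctly isolates the crucial step --- turning ``all Baum--Bott indices are frozen'' into ``the three invariant lines persist'' --- but the route you propose for it does not work, and the idea the paper actually uses is absent from your sketch.

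Your primary plan is to compute the first variation of $\nu((0,0))$ along a deformation that breaks, say, the line $x=0$, and to show this derivative is nonzero. But the level set $\{\nu((0,0))=\mathrm{const}\}$ is a codimension-one hypersurface in the space of regular representatives, and so is the locus $\{x=0\text{ invariant}\}$; there is no reason for these two hypersurfaces to coincide, and in general they do not. A deformation can break the invariance of $x=0$ while keeping $\nu((0,0))$ fixed by compensating with a change in the other eigendirection at the origin. The same objection applies even if you freeze all three $\nu$'s at the vertices: three conditions on the $\nu$'s do not cut out the same codimension-three locus as the three line-invariance conditions. Your fallback ``global dimension count'' assumes $v_\e$ already lies in the Darboux locus, which is exactly what has to be shown.

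The missing idea is the Camacho--Sad index relation. For each of the three candidate lines, say $l=\{y=0\}$, one writes the sum
\[
\frac{\lambda_4}{\mu_4}(O_4,v)+\frac{\lambda_5}{\mu_5}(O_5,v)+\frac{\lambda_1}{\mu_1}(O_1,v)=1,
\]
where $O_4=(0,0)$, $O_5=(1,0)$, $O_1$ is the nearby infinite singular point, and $\mu_j$ is the eigenvalue whose eigendirection is (the continuation of) the one tangent to $l$. This relation is a \emph{consequence} of the invariance of $l$, so the locus $\mathrm{CS}$ where it holds contains the locus $\mathrm{IL}$ where $l$ is invariant. Both are irreducible of codimension one near $v_0$, hence they coincide there. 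Now $M(v_\e)\equiv M(v_0)$ freezes each $\nu(O_j)$, hence by continuity each $\lambda_j/\mu_j$, so the Camacho--Sad sum is constant along $\gamma$ and equal to its value $1$ at $\e=0$; thus $\gamma\subset\mathrm{CS}=\mathrm{IL}$, and $l$ stays invariant. Repeat for the other two lines. The point you are missing is that the right scalar function to track is not a single $\nu$ at a single vertex but this particular sum of three characteristic numbers, one per singular point on the line --- including the one at infinity --- because that sum is exactly the quantity the Camacho--Sad theorem ties to invariance of the line.
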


\begin{proof} The invariance of the line $l$ passing through two
singular points of a \qvf $v$ is established like follows. Take any
nonsingular point $p \in l$. The line $l$ is invariant for $v$ iff
$v(p) \in T_p l$. Hence, the set of \qvfs with the singular points
$(0,0), (1,0), (0,1)$  for which the line $l = \{ y = 0 \}$ is
invariant has codimension one. Denote this set by IL, for
\emph{invariant line}.

The line $y = 0$ contains a third singular point of the extended
\fol $v_0$ at infinity. On the other hand, let $U \subset \t \A_2$
be a small \nbd of $v_0$, $O_4 = (0,0), \ O_5 = (1,0)$, and $ O_1$
be an infinite singular point of the \vf $v \in U$, close to the
infinite \sp of $v_0$ that belongs to $l$. Then the set of \vfs $v
\in U$ \st

\begin{equation} \label{eqn:cs1} \frac {\lambda_4}{\mu_4}(O_4,v) +
\frac {\lambda_5}{\mu_5}(O_5,v) + \frac {\lambda_1}{\mu_1}(O_1, v) = 1 \end{equation}
has codimension $1$.  Here $\mu_j$ are eigenvalues that correspond
to eigenvectors tangent to $l$. Denote this set by CS, for Camacho and Sad.
Now, invariance of the line $l$
for $v$ implies \eqref{eqn:cs1}. Hence, CS $\supset $ IL. But both
algebraic sets have codimension one. It is easy to see that both
sets CS and IL are irreducible near $v_0$. Hence, CS $\cap U =
\mbox{IL} \cap U$. By assumption, $\g \subset CS$. Hence, $\g
\subset $ IL. Therefore, the line $l$ is invariant for all the
fields $v_\e $. The same argument shows that the other two lines $x
= 0, \ x + y = 1$ are invariant for $v_\e $ for all $\e $.
\end{proof}

It is well known that a quadratic vector field having three
invariant lines has a multivalued Darboux first integral. For $v_\e$
it is

$$ F (x, y, \e) = x^{a(\e)} y^{b(\e)} (x + y - 1)^{c(\e)}. $$

Dividing all the exponents by a non-vanishing one preserves the first
integral. Hence, we may assume that $c(\e) = 1$. Then $a(\e)$ and
$b(\e)$ are the characteristic numbers at the points $(1,0)$  and
$(0,1)$. By assumption, they do not depend on $\e$. Hence, the
Darboux integral above does not depend on $\e$, and the foliations
determined by the \vfs $v_\e$ coincide, a contradiction. \end{proof}

This implies Theorem~\ref{thm:mod}. Together with
Theorem~\ref{thm:inv1}, and the quoted result of Lins Neto
, see Subsection~\ref{sub:main}, this implies our main result,
Theorem~\ref{thm:strong}.

\section{Open problem: total rigidity for higher degrees}
\label{sec:open}

Theorem~\ref{thm:inv} holds for polynomial \fols of arbitrary
degree. Yet the target of the \mm $\M_n$ has the same dimension as
$\t \A_n = \A_n/\mbox{Aff} \cc^2 \times \cc^*$ for $n = 2$, and
smaller dimension for larger $n$. Indeed, $$\mbox{dim} \t \A_n =
(n+1)(n+2)-7,$$ $$\mbox{dim} \M_n \le n^2 + n - 1.$$  The difference
is \begin{equation}  \label{eqn:dim} \mbox{dim} \t \A_n - \mbox{dim}
\M_n = 2n - 4. \end{equation}  It is $0$ for $n = 2$ only. For
larger $n$ the arguments above fail.

Yet the moduli space may be extended. Indeed, topological
equivalence of generic \fols implies analytic equivalence of the
corresponding monodromy groups. We used a very weak corollary of
this equivalence, namely, the coincidence of the linear terms of the
monodromy only. Equivalence relations on higher jets imply relations
on the higher Taylor coefficients of the monodromy maps. It may be
shown that the equivalence relations on the quadratic and cubic
terms generate extra $2n - 3$ moduli, thus compensating the gap
\eqref{eqn:dim}. This generates extended moduli map $\t \mu $.

\begin{prob} \label{prob:rank} Is it correct that the extended \mm
has full rank at a generic point? \end{prob}

\section{Acknowledgements}  \label{sec:ackn} The first author is grateful to the
organizers of the ``School in holomorphic \fol and dynamical
systems'', Mexico, August 2010, especially to Laura Ortiz and
Ernesto Rosales; to Adolfo Guillot, who introduced him in the recent
progress in the study of the Baum-Bott map; to all the participants
of the above mentioned School for the enthusiastic and creative
atmosphere, and to the UNAM that provided an ideal environment for
writing of this paper. Both authors are grateful to the Cornell
University, where the first theorem on total rigidity of quadratic
\fols was proved by the second author in the thesis \cite{M06} done
under the advisory of the first one.

\end{document}